\newtheoremstyle{break}
   {9pt}
   {9pt}
   {\itshape}
   {}
   {\bfseries}
   {.}
   {0.5em}
   {}
\newtheoremstyle{definitionbreak}
   {9pt}
   {9pt}
   {\rm}
   {}
   {\bfseries}
   {.}
   {0.5em}
   {}
\theoremstyle{break}
\newtheorem{thm}{Proposition}[section]
\newtheorem{them}[thm]{Theorem}
\newtheorem{cor}[thm]{Corollary}
\theoremstyle{definitionbreak}
\newtheorem{definition}[thm]{Definition}
\newtheorem{r&n}[thm]{Remark and Notation}
\DeclareMathOperator{\length}{length}
\DeclareMathOperator{\LC}{LC}
\DeclareMathOperator{\Proj}{Proj}
\DeclareMathOperator{\Spec}{Spec}
\DeclareMathOperator{\Sing}{Sing}
\newcommand\idq{{\mathfrak{q}}}
\newcommand\idp{{\mathfrak{p}}}
\newcommand\Epi{\twoheadrightarrow}
\newcommand\inkl{\hookrightarrow}
\newcommand\bs{\backslash}
\newcommand\tm{\subseteq}
\newcommand\ntm{\not\subseteq}
\newcommand\Nat{\mathbb{N}_0}
\newcommand\NN{\mathbb{N}}
\newcommand\PP{\mathbb{P}}
\newcommand\defin{\mathrel{\mathop:}=}
\newcommand\sat{^{\mbox{{\footnotesize\rm sat}}}}
\newcommand\sO{{\cal O}}
\newcommand\sK{\mathfrak{K}}
\newcommand{\und}{\mathop{\wedge}\mspace{2mu}}
\newcommand{\wt}[1]{\widetilde{#1}}
\newcommand{\res}{\!\!\upharpoonright}
\newcommand{\ol}[1]{\overline{#1}}
\begin{document}

\title{Equations describing the ramification of outer simple linear projections\footnote{Mathematics Subject Classification 14N05, 14Q99; Key words Simple linear projection, Ramification}}
\author{Simon Kurmann}
\date{}

\maketitle

\begin{abstract}
 We explain how to determine equations describing the ramification of an outer simple linear projection of a projective scheme in a way suited for explicite computations.
\end{abstract}

\section{Introduction}

When we try to understand a morphism $\pi: \wt Z \to Z$ of Noetherian schemes, we might turn our attention to its ramification, that is to the set \[\Sing(\pi) \defin \{z \in Z \mid \# \pi^{-1}(z) > 1\}\] where $\#\pi^{-1}(z) = \length(\sO_{\wt Z}\otimes k(z))$ is the length of the fiber $Z\times \Spec(k(z))$ over $z \in Z$. Often, the ramification of a morphism tells us much about the morphism itself, but it can be rather difficult to determine. In this article, we develope a method to describe the ramification of $\pi$ in the case that $\pi$ is an outer simple liner projection of a projective scheme $\wt Z$. By the use of Gr\"obner bases, this method is suited for explicite computations. In the case of an outer simple linear projection $\pi$ of a projective scheme $\wt Z \tm \PP^n$, the ramification of $\pi$ also can bear crucial information about the image $Z = \pi(\wt Z) \tm \PP^{n-1}$. For example, an important subclass of varieties of almost minimal degree are outer simple linear projections of rational normal scrolls, and these varieties in turn can be classified according to the ramification of the projection (see \cite{BrP}).\\
In general, we may not expect to understand easily a morphism of projective varieties. But an outer simple linear projection $\pi: \wt Z \to Z$ from $p \in \PP^n_K\bs\wt Z$ over an algebraically closed field $K$ has a crucial property that makes it accessible: Consider $\PP^{n-1}_K$ as a subspace of $\PP^n_K$ avoiding $p$. The fiber $\pi^{-1}(q) = \wt Z \cap \langle q,p\rangle$ over a closed point $q \in Z$ is a closed subscheme of finite length of the projective line $\langle q,p\rangle = \PP^1_K$ spanned by $q$ and $p$; so, it is an effective divisor on $\PP^1_K$, and we can write $\pi^{-1}(q) = \sum_{i=1}^e \lambda_i\wt q_i$ for some distinct closed points $\wt q_1, \ldots, \wt q_e \in \PP^1_K$ and integers $\lambda_1, \ldots, \lambda_e \in \NN$. Being a subscheme of $\PP^1_K$ of finite length, $\pi^{-1}(q)$ is determined by one equation. Moreover, the numbers $\lambda_1, \ldots, \lambda_e$ already determine the isomorphy class of the scheme $\pi^{-1}(q)$ (but, of course, not the divisor itself). So, if we know how those numbers vary over $Z$, we understand the ramification of $\pi$. We can also change our point of view and consider the classical question how the image $Z$ depends on the center of projection. As an example, assume that $\wt Z$ is a smooth curve and $\pi$ is birational. Then the singular locus of $Z$ consists of the points whose fibers are not of type $e =1$ and $\lambda_1 = 1$. A closed point of $Z$ with $e=2$ and $\lambda_1=\lambda_2=1$ is a node, while the type $e = 1$ and $\lambda_1 = 2$ indicates a cusp.\\
Throughout, we are using the following setup: Let $\wt Z\tm \PP^n_K$ be a projective scheme where $K$ is an algebraically closed field of characterisic 0 and $n \in \NN$. We want to study $Z = \wt\pi(\wt Z)$ in relation to the center of projection $p \in \PP^n_K\bs\wt Z$ of the simple linear projection $\wt\pi: \PP_K^n\bs\{p\}\to \PP^{n-1}_K$. We consider $\PP^{n-1}_K$ embedded into $\PP^n_K$; this embedding is not unique, but any embedding with $p \notin \PP^{n-1}_K$ yields the same situation. We denote $\pi \defin \wt\pi\res_{\wt Z}: \wt Z \Epi Z$. We now want to give a method to find equations describing the ramification of $\pi$ depending only on equations defining $\wt Z$ and $p$. We do so in two ways: First, for $k \in \NN$, define \[Z_k^\circ \,\defin\, \{q \in Z \text{ closed point}\mid \length_{\sO_{\PP^1_K}}(\sO_{\pi^{-1}(q)}) = k\},\] the locally closed set of closed points of $Z$ such that the divisor $\pi^{-1}(q)$ has degree $\lambda_1+\cdots+\lambda_e = k$. In Proposition \ref{covering} and Corollary \ref{GBcovering}, we describe a finite affine covering $(D_g)_g$ of $Z_k^\circ$, where $D_g$ is the open subset of $Z_k^\circ$ defined by the section $g$. For any closed poinst $q \in D_g$, the ramification type of the fiber $\pi^{-1}(q)$ is determined by the distribution of the linear factors of the restrictions of $g$ to $\langle q, p\rangle$. Secondly, denote \[Z_k \,\defin\, \{q \in Z \text{ closed point}\mid \length_{\sO_{\PP^1_K}}(\sO_{\pi^{-1}(q)}) \geq k\} \,=\, \bigcup_{l\geq k} Z_l^\circ.\] A partition $\lambda = (\lambda_1, \ldots, \lambda_e)$ of $k \in \NN$ is a family of integers with $\lambda_1+\cdots+\lambda_e = k$. We use the above covering to give equations defining the $\lambda$-ramification locus of $\pi$ for any partition $\lambda$:

\begin{definition}
 Let $k \in \NN$, and let $\lambda = (\lambda_1, \ldots, \lambda_e)$ be a partition of $k$. The {\it proper $\lambda$-ramification locus of $\pi$} is the locally closed set \[Z_\lambda^\circ \defin \left\{q \in Z \text{ closed point}\,\left|\,\begin{array}{c} \pi^{-1}(q) = \sum_{i=1}^e\lambda_i\wt q_i\\ \text{ with distinct closed points } \wt q_1, \ldots, \wt q_e \in \langle q,p\rangle\end{array}\right.\right\},\] 
 and the {\it $\lambda$-ramification locus of $\pi$} is the closed set \[Z_\lambda \,\defin\, \left\{q \in Z \text{ closed point}\mid \exists \wt q_1, \ldots, \wt q_e \in \langle q,p\rangle: \pi^{-1}(q) = \sum_{i=1}^e\lambda_i\wt q_i\right\} \cup Z_{k+1}.\]
\end{definition}

Note that the closed points $\wt q_1, \ldots, \wt q_e \in \pi^{-1}(q)$ in the definition of $Z_\lambda$ need not be distinct. If two of them are equal for a closed point $q \in Z_\lambda\cap Z_k^\circ$, then $q$ is also contained in $Z_\mu$ for a partition $\mu$ of $k$ such that  $\lambda$ is a refinement of $\mu$; in this situation, we call $\mu$ a {\it coarsening} of $\lambda$. Geometrically, the set $\bigcup_\mu Z_\mu^\circ$ where $\mu\neq \lambda$ runs over all coarsenings of $\lambda$ can be considered as the degeneracy locus of $Z_\lambda\cap Z_k^\circ$. Also, $Z_\lambda$ must contain $Z_{k+1}$ to be closed. It holds \[Z_\lambda^\circ = Z_\lambda\bs\left(Z_{k+1} \cup \bigcup\{Z_\mu\mid \mu \text{ a coarsening of } \lambda \text{ with } \mu \neq \lambda\right).\] If we can determine equations defining the closed sets $Z_k$ and $Z_\lambda$ for all $k \in \NN$ and all partitions $\lambda$ of $k$, then we can give a description of $Z_\lambda^\circ$ via equations of $Z_\lambda$ and $Z_\lambda\bs Z_\lambda^\circ$. The equations defining $Z_k$ can be computed using partial elimination ideals (compare \cite{G} and \cite{ego}), and we will use this to show how to compute equations defining $Z_\lambda$ in Theorem \ref{Satz} and Corollary \ref{last}.\\
Both descriptions of the ramification of $\pi$ will be given in a way suited for explicite computations using a Gr\"obner basis of the homogeneous ideal of $\wt Z$.\\
The results in this article are part of my doctoral thesis \cite{Diss}, where a more detailed account can be found.

\section{Covering $Z_k^\circ$}

In order to formulate our results, we must first introduce some notations: Let $R = K[x_0, \ldots, x_n]$ be the homogeneous coordinate ring of $\PP^n_K$. We denote the homogeneous ideal in $R$ of a closed point in $\PP^n_K$ by the corresponding gothic letter, e.g., $\idp$ and $\wt\idq_i$ are the ideals of the closed points $p$ and $\wt q_i$, respectively. Let $S \defin K[\idp_1]$ be the $K$-algebra generated by the space of linear forms $\idp_1$ of the homogeneous ideal of the closed point $p \in \PP^n_K$. Then, the simple linear projection $\wt\pi: \PP^n_K \bs\{p\} \to \PP^{n-1}_K$ with center $p$ is given by the inclusion $S \inkl R$. For a closed subscheme $\wt Z \tm \PP^n_K$ with $p \notin \wt Z$, denote $I_{\wt Z} \tm R$ the saturated ideal with $\wt Z = \Proj(R/I_{\wt Z})$. The homogeneous ideal of $Z = \wt\pi(\wt Z)$ is $I_Z = I_{\wt Z}\cap S$, and the outer simple linear projection $\pi = \pi\res_{\wt Z}: \wt Z \Epi Z$ is given by $S/I_{\wt Z} \inkl R/I_Z$. Let $x \in R_1\bs\idp_1$ be a linear form not vanishing in $p$. We identify $R = S[x]$ and hence can consider an element $f \in R$ as a polynomial in $x$ over $S$. We denote by $\deg_x(f)$ its degree in $x$ and by $\LC_x(f)$ its leading coefficient in $S$. For $k \in \Nat$, the {\it $k$-th partial elimination ideal of $I_{\wt Z}$ with respect to $p$} is defined by \[\sK_k^p(I_{\wt Z}) \,\defin\, \{f_0 \in S \mid \exists \wt f \in R: \deg_x(\wt f)<k \und x^kf_0+\wt f \in I_{\wt Z}\},\] a graded ideal in $S$ that does not depend on our choice of $x$ (compare \cite{ego}). The partial elimination ideals form an ascending chain of ideals \[\sK_0^p(I_{\wt Z}) = I_Z \,\tm\, \sK_1^p(I_{\wt Z}) \,\tm\, \sK_1^p(I_{\wt Z}) \,\tm\, \cdots\] Note that, as $p \notin \wt Z$, there is an integer $l \in \NN$ such that $x^l \in I_{\wt Z} + \idp$, hence $1 \in \sK^p_l(I_{\wt Z})$ and $\sK^p_l(I_{\wt Z}) = S$. 

\begin{thm}\label{PEI}
 For all $k \in \NN$, set-theoretically $\sK_{k-1}^p(I_{\wt Z})$ is the ideal of $Z_k$.
\end{thm}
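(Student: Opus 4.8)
The goal is the set equality $V(\sK_{k-1}^p(I_{\wt Z}))=Z_k$ among the closed points of $\Proj(S)=\PP^{n-1}_K$, for every $k$. The plan is to reduce this to a single numerical identity attached to each closed point and then split that identity into one elementary and one substantial inequality. Fix a closed point $q\in Z$ with homogeneous ideal $\idq\tm S$, set $d\defin\length_{\sO_{\PP^1_K}}(\sO_{\pi^{-1}(q)})$ and $K\defin\min\{\,j\in\Nat\mid \sK_j^p(I_{\wt Z})\ntm\idq\,\}$; this minimum exists since $\sK_l^p(I_{\wt Z})=S$ for some $l$. As the partial elimination ideals form an ascending chain, $q\in V(\sK_j^p(I_{\wt Z}))$ holds exactly for $j<K$, so $q\in V(\sK_{k-1}^p(I_{\wt Z}))\Leftrightarrow k\le K$, while $q\in Z_k\Leftrightarrow d\ge k$. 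Thus the theorem is equivalent to $d=K$ for every such $q$, and I would prove it by the two inequalities $K\ge d$ and $K\le d$.

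To read $d$ off a line, restrict forms to $L=\langle q,p\rangle$: let $\rho\colon R\to K[s,t]$ be the induced map onto the homogeneous coordinate ring of $L\cong\PP^1_K$, normalised so that $p$ is the point $\{t=0\}$, $\rho(x)=s$, and $\rho(c)=c(q)\,t^{\deg c}$ for homogeneous $c\in S$ (with $c(q)\in K$ the value at $q$). For homogeneous $f=\sum_i c_ix^i$ one gets $\ol{f}\defin\rho(f)|_{t=1}=\sum_i c_i(q)\,s^i\in K[s]$, whose $s$-degree is $\max\{i\mid c_i(q)\ne 0\}$, i.e.\ equals $\deg_x f$ exactly when $\LC_x(f)(q)\ne 0$. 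Since $p\notin\wt Z$, the fibre $\pi^{-1}(q)=\wt Z\cap L$ avoids $p$, hence lies in the chart $t\ne 0$ and $d=\deg_s g$ with $g$ the greatest common divisor of the family $(\ol f)_{f\in I_{\wt Z}}$. The useful reformulation is: $q\notin V(\sK_j^p(I_{\wt Z}))$ iff there is a homogeneous $f\in I_{\wt Z}$ with $\deg_x f=\deg_s\ol f=j$, a \emph{no-drop} element of $x$-degree $j$.

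The inequality $K\ge d$ (equivalently $Z_k\tm V(\sK_{k-1}^p(I_{\wt Z}))$) is elementary: a no-drop element of $x$-degree $j\le k-1$ has $\ol f\ne 0$ with $\deg_s\ol f=j$, so $g\mid\ol f$ forces $d=\deg_s g\le j\le k-1$, and contraposition gives the inclusion. I would also record the module-theoretic form, which sets up the hard direction: since $p\notin\wt Z$ the projection $\pi$ is finite, so $\pi_*\sO_{\wt Z}$ is the sheaf associated to the finite graded $S$-module $A\defin R/I_{\wt Z}$ and $d=\dim_{k(q)}\big((\pi_*\sO_{\wt Z})\otimes k(q)\big)$. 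Filtering $A$ by $x$-degree yields the associated graded module $\bigoplus_j(S/\sK_j^p(I_{\wt Z}))(-j)$, whose sheaf is $\bigoplus_j\sO_{V(\sK_j^p(I_{\wt Z}))}(-j)$, of fibre dimension $\#\{j\mid q\in V(\sK_j^p(I_{\wt Z}))\}=K$ at $q$; the standard inequality $\dim_{k(q)}\mathrm{fib}(A)\le\dim_{k(q)}\mathrm{fib}(\mathrm{gr}\,A)$ again gives $d\le K$.

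What remains is $K\le d$, equivalently $\sK_d^p(I_{\wt Z})\ntm\idq$: if the fibre has length $d$, there must exist a homogeneous $f\in I_{\wt Z}$ with $\deg_x f=d$ and $\LC_x(f)(q)\ne 0$. This is the crux. The difficulty is a \emph{drop}: one readily produces $f\in I_{\wt Z}$ whose restriction $\ol f$ is a constant multiple of $g$ (so the topmost coefficient not vanishing at $q$ sits in $s$-degree $d$), yet a priori only with $\deg_x f>d$ and leading coefficient vanishing at $q$; one must show such a drop can always be removed at the critical degree $d$. Equivalently, the filtration comparison of the previous paragraph must be an \emph{equality}, which fails for general filtered modules and so genuinely uses the structure at hand — this is the theorem on partial elimination ideals of \cite{G} and \cite{ego}. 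I would prove it by passing to a generic initial ideal of $I_{\wt Z}$ for a term order refining the $x$-degree: being Borel-fixed it is monomial, so its partial elimination ideals and the lengths of the fibres of the associated projection are transparent and the identity $d=K$ is immediate there; one then transfers it back to $I_{\wt Z}$ using that the generic initial ideal preserves the relevant Hilbert functions and fibre lengths, together with the saturatedness of $I_{\wt Z}$ (which also secures the base case $\sK_0^p(I_{\wt Z})=I_Z$ and the scheme-theoretic reading of the fibre). This transfer is where I expect the real work to lie.
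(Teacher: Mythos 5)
Your reduction to the pointwise identity $d=K$ and your proof of the inequality $d\le K$ (both the divisibility argument with the generator $F_q$ of $I(\pi^{-1}(q))$ and the filtration argument via the associated graded module $\bigoplus_j(S/\sK_j^p(I_{\wt Z}))(-j)$) are correct. The genuine gap is exactly where you place it: the inequality $K\le d$, i.e.\ the production of a homogeneous $f\in I_{\wt Z}$ with $\deg_x(f)=d$ and $\LC_x(f)\notin\idq$, is never proved, and the strategy you sketch for it cannot work. A generic initial ideal is the initial ideal after a \emph{generic} change of coordinates, which moves both the fixed center $p$ and the fixed point $q$; more fundamentally, the transfer principle you invoke is false: Gr\"obner degeneration preserves Hilbert functions but not fibre lengths over individual closed points. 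Indeed, $\In(I_{\wt Z})$ is a monomial ideal, so $\Proj(R/\In(I_{\wt Z}))$ is supported on a union of coordinate subspaces, and so is its image under $\wt\pi$; already the locus $Z_1 = Z$ is therefore not preserved (for a twisted cubic and a general center, $Z$ is a nodal cubic, while the projected degenerate scheme is supported on a line), let alone the higher $Z_k$ or the fibre length over a given $q$. What does pass to the initial ideal is the ideal-theoretic statement behind Proposition \ref{Gro}, namely $\LT(\sK_k^p(I_{\wt Z}))=\sK_k^p(\LT(I_{\wt Z}))$ for elimination orders; this yields Gr\"obner bases and Hilbert functions of the $\sK_k^p$, but no pointwise geometric information, so proving the claim for monomial ideals does not come back to $I_{\wt Z}$. (For calibration: the paper itself offers no proof, deferring to \cite[Theorem 3.5]{CS} and \cite[Corollary 3.5]{ego}; those proofs do not proceed by degeneration.)

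The missing step is closed by local algebra at $q$, using the finiteness that $p\notin\wt Z$ forces --- this is, in substance, what the cited proofs use. Since $x^l\in I_{\wt Z}+\idp$ for some $l$, graded Nakayama shows $A\defin R/I_{\wt Z}$ is a finite $S$-module. Localize homogeneously at $\idq$: with $y\in S_1\bs\idq_1$, $u\defin x/y$ and $B\defin S_{(\idq)}$, one gets $A_{(\idq)}=B[u]/I'$, a finite $B$-module whose fibre $A_{(\idq)}\otimes_B K$ is the coordinate ring of the affine scheme $\pi^{-1}(q)$; hence the image of $I'$ in $K[u]$ equals $(h)$ with $h$ monic of degree $d$. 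So the classes of $1,u,\ldots,u^{d-1}$ span the fibre, and by Nakayama they generate $A_{(\idq)}$ over $B$; in particular $u^d-\sum_{i<d}b_iu^i\in I'$ for some $b_i\in B$. Writing this element as $f/t$ with $f\in I_{\wt Z}$ homogeneous and $t\in S\bs\idq$ homogeneous of the same degree, and comparing coefficients of powers of $u$ (using that $S$ is a domain), one finds $\deg_x(f)=d$ and $\LC_x(f)\notin\idq$ --- precisely the removal of the ``drop'' you identified as the obstacle. Hence $\sK_d^p(I_{\wt Z})\ntm\idq$ and $K\le d$. Note that this is where the hypothesis $p\notin\wt Z$ enters irreplaceably, and that no genericity or deformation is involved.
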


For a proof of this proposition, see \cite[Theorem 3.5]{CS} or \cite[Corollary 3.5]{ego}. Now, fix a closed point $q \in Z$ and a linear form $y \in S_1 \bs \idq_1$ not vanishing in $q$. Then, $K[x,y] = R/\idq R$ is the homogeneous coordinate ring of the projective line $\langle q,p\rangle = \PP^1_K$ spanned by $p$ and $q$ in $\PP^n_K$. The homogeneous ideal \[I(\pi^{-1}(q)) = (I_{\wt Z}+\idq R)\sat/\idq R\] of $\pi^{-1}(q) = \langle q,p\rangle\cap\wt Z$ is a saturated ideal in $K[x,y]$, hence it is a principal ideal. Let $F_q \in K[x,y]$ be a homogeneous generator of $I(\pi^{-1}(q))$. For $k \in \NN$, it holds  \[q \in Z_k^\circ \,\Leftrightarrow\, \left\{\begin{array}{lclclclcl} k &=& \length_{\sO_{\PP^1_K}}(\sO_{\pi^{-1}(q)})\\ &=& e_0(K[x,y]/F_qK[x,y]) &=& \deg(F_q)\end{array}\right.\] where $e_0$ denotes the Hilbert multiplicity; see \cite[Section 3]{ego} for these equalities. Hence, if $q \in Z_k^\circ$, the polynomial $F_q$ is a binary form of degree $k$, and as such it is the product of $k$ linear forms $L_1, \ldots, L_k \in K[x,y]_1$. We identify two binary forms of the same degree if they only differ by a factor $\kappa \in K^\ast$, i.e., if they are equal up to multiplication with units. A closed point $\wt q \in \pi^{-1}(q)$ corresponds to a linear factor $L_{\wt q}$ of $F_q$, and for any partition $\lambda$ of $k$, it holds \[\pi^{-1}(q) = \sum_{i=1}^e\lambda_i\wt q_i \,\Leftrightarrow\, F_q = \prod_{i=1}^eL_{\wt q_i}^{\lambda_i}.\] 
Thus, to determine the number $e$ of closed points in the fiber $\pi^{-1}(q)$ and their multiplicities $\lambda_1, \ldots, \lambda_e$ is the same as to determine the number of distinct linear factors of $F_q$ and their multiplicities. Any binary form $F \in K[x,y]$ of degree $k \in \Nat$ can be written $F = \sum_{j=0}^ka_jx^{k-j}y^j$ with $a_0, \ldots, a_k \in K$. For a partition $\lambda$ of $k$, the {\it $\lambda$-coincident root locus $X_\lambda$} is the projective variety in $\PP^k_K$ of binary forms of degree $k$ whose linear factors are distributed according to $\lambda$, i.e., \[X_\lambda = \{(a_0:\cdots:a_k) \in \PP^k_K \mid \sum_{j=0}^ka_jx^{k-j}y^j = \prod_{i=1}^eL_i^{\lambda_i} \text{ for } L_1, \ldots, L_e \in K[x,y]_1\}.\] Note that the linear factors $L_1, \ldots, L_e$ need not be different; corresponding to the closed points in $\pi^{-1}(q)$, some of them might coincide. In this situation, it holds $\sum_{j=0}^ka_jx^{k-j}y^j \in X_\mu$ for some coarsening $\mu$ of $\lambda$. We denote by $X_\lambda^\circ$ the open subset of $X_\lambda$ consisting of all binary forms $F = \prod_{i-1}^eL_i^{\lambda_i}$ with distinct linear factors $L_1, \ldots, L_e$, i.e., \[X_\lambda^\circ = X_\lambda\bs\left(\bigcup\{X_\mu\mid \mu \text{ a coarsening of } \lambda \text{ with } \mu \neq \lambda\right).\] For an in-depth study of coincident root loci, see \cite{Chi}. 
Now, for a closed point $q \in Z$, fix $y \in S_1\bs\idq$. For any $f \in R$, we denote the restriction of $f$ to the line $\langle q,p\rangle$ by $f\res_q$; more precisely, \[f\res_q \,\,\defin\,\, f\res_{\langle q,p\rangle} \,\,=\,\, f+\idq \,\,\in\,\, K[x,y] = R/\idq R.\] For any set $G \tm R = S[x]$ of polynomials in $x$ over $S$ and all $k \in \Nat$, we denote by \[G_{x,k} \,\defin\, \{f \in G \mid \deg_x(f) \leq k\} \quad \text{and} \quad G_{x,k}^\circ \,\defin\, G_{x,k}\bs G_{x,k-1}\] the subsets of all polynomials whose degree in $x$ is less than $k+1$ and equal to $k$, respectively. Also, for $f \in R$ with $\deg_x(f) = k$, we denote by \[D_f \,\defin\, Z_k^\circ\bs V(\LC_x(f))\] the set of closed points of $Z_k^\circ$ in which the leading coefficient of $f$ with respect to $x$ does not vanish. Note that $D_f$ is an affine open subset of $Z_k^\circ$. With this notations, we are finally able to give an affine covering of $Z_k^\circ$ describing the ramification of $\pi$. 

\begin{thm}\label{covering}
 Let $k \in \NN$. The sets $D_f$ for $f \in (I_{\wt Z})_{x,k}^\circ$ form an affine covering of $Z_k^\circ$. Moreover, for all $f \in (I_{\wt Z})_{x,k}^\circ$, all $q \in D_f$ and all partitions $\lambda$ of $k$, it holds \[q \in Z_\lambda^\circ \,\Leftrightarrow\, f\res_q \in X_\lambda^\circ.\]
\end{thm}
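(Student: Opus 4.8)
The plan is to prove the two assertions separately: first that the sets $D_f$ cover $Z_k^\circ$, and then, for a fixed $f$ and $q\in D_f$, the equivalence $q\in Z_\lambda^\circ\Leftrightarrow f\res_q\in X_\lambda^\circ$. The covering is essentially a bookkeeping consequence of the theory of partial elimination ideals, whereas the equivalence is the geometric content and rests on comparing the restriction $f\res_q$ with the fibre generator $F_q$.

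For the covering, I would fix a closed point $q\in Z_k^\circ$ and produce an $f\in(I_{\wt Z})_{x,k}^\circ$ with $q\in D_f$. Since $Z_k^\circ=Z_k\bs Z_{k+1}$, Proposition \ref{PEI} applied to the indices $k$ and $k+1$ shows that $q$ lies in $V(\sK_{k-1}^p(I_{\wt Z}))$ but not in $V(\sK_k^p(I_{\wt Z}))$. Hence there is a homogeneous $f_0\in\sK_k^p(I_{\wt Z})$ with $f_0(q)\neq 0$. Unwinding the definition of the $k$-th partial elimination ideal, I obtain $\wt f\in R$ with $\deg_x(\wt f)<k$ and $f\defin x^kf_0+\wt f\in I_{\wt Z}$. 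Then $\deg_x(f)=k$ and $\LC_x(f)=f_0$, so $f\in(I_{\wt Z})_{x,k}^\circ$, and $\LC_x(f)(q)=f_0(q)\neq 0$ gives $q\in D_f$. As each $D_f$ is an affine open subset of $Z_k^\circ$, this exhibits the family $(D_f)_f$ as an affine covering.

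For the equivalence, fix $f\in(I_{\wt Z})_{x,k}^\circ$ and $q\in D_f$. The key step is to identify $f\res_q$ with $F_q$ up to the linear factor belonging to the centre $p$. Since $f\in I_{\wt Z}\tm(I_{\wt Z}+\idq R)\sat$, its restriction $f\res_q$ lies in $I(\pi^{-1}(q))=F_qK[x,y]$, so $F_q\mid f\res_q$. Because $q\in D_f$, the coefficient $\LC_x(f)$ does not vanish at $q$, so the coefficient of $x^k$ in $f\res_q$ is nonzero and $\deg_x(f\res_q)=k$. As $p\notin\wt Z$, the point $p=(1:0)$ is not a root of $F_q$, whence $y\nmid F_q$ and $\deg_x(F_q)=\deg(F_q)=k$. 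Writing $f\res_q=F_q\cdot H$, comparison of $x$-degrees forces $\deg_x(H)=0$, i.e. $H$ is a nonzero scalar multiple of $y^{\deg(f)-k}$. Thus $f\res_q$ and $F_q$ carry the same distinct linear factors with the same multiplicities, except for the factor $y$ corresponding to $p$; since $p\notin\pi^{-1}(q)$, this factor is irrelevant for the ramification type. Consequently $f\res_q\in X_\lambda^\circ$ if and only if $F_q\in X_\lambda^\circ$, and by the equivalence $\pi^{-1}(q)=\sum_{i=1}^e\lambda_i\wt q_i\Leftrightarrow F_q=\prod_{i=1}^eL_{\wt q_i}^{\lambda_i}$ recorded before the theorem, together with the definitions of $X_\lambda^\circ$ and $Z_\lambda^\circ$, the latter is equivalent to $q\in Z_\lambda^\circ$.

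The main obstacle I anticipate is precisely the comparison of $f\res_q$ with $F_q$: one must invoke the saturation in $I(\pi^{-1}(q))=(I_{\wt Z}+\idq R)\sat/\idq R$ to justify $F_q\mid f\res_q$, and, more delicately, control the excess factor $y^{\deg(f)-k}$. Since $f\res_q$ has degree $\deg(f)$, which may exceed $k$, the assertion $f\res_q\in X_\lambda^\circ\tm\PP^k_K$ must be read after dividing out this excess factor at the centre $p$; making this reading precise and checking that it does not alter the coincident-root type is exactly where the hypothesis $p\notin\wt Z$, hence $y\nmid F_q$, enters. The remaining identifications, namely $\deg(F_q)=k$ for $q\in Z_k^\circ$ and the dictionary between linear factors of $F_q$ and points of the fibre, are supplied by the discussion preceding the theorem and by \cite{ego}.
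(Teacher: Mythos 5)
Your proposal is correct and takes essentially the same route as the paper: the covering is extracted from Proposition \ref{PEI} via the definition of the partial elimination ideal $\sK_k^p(I_{\wt Z})$, and the equivalence comes from comparing $f\res_q$ with the degree-$k$ generator $F_q$ of $I(\pi^{-1}(q))$. If anything, you are more careful than the paper on one point: the paper simply asserts $I(\pi^{-1}(q)) = f\res_q(R/\idq R)$, whereas you explicitly track the excess factor $y^{\deg(f)-k}$ (which makes that assertion literally true only when $\deg(f)=k$) and explain why it is harmless --- a subtlety the paper itself only treats carefully later, in the proof of Theorem \ref{Satz}.
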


\begin{proof}
 Let $q \in Z_k^\circ$ be a closed point with homogeneous ideal $\idq \in \Proj(S)$. By Proposition \ref{PEI}, it holds $\sK_k^p(I_{\wt Z}) \ntm \idq$ and $\sK_l^p(I_{\wt Z}) \tm \idq$ for all $l < k$. Hence, there is an element $f \in (I_{\wt Z})_{x,k}^\circ$ with $\LC_x(f) \notin \idq$. Thus $q \in D_f$, and \[0 \neq \LC_x(f\res_q) = \LC_x(f)\res_q \in S/\idq.\] It follows $\deg_x(f\res_q) = k$. As $I(\pi^{-1}(q)) \tm R/\idq R$ is generated by an element of degree $k$, this implies $I(\pi^{-1}(q)) = f\res_q(R/\idq R)$, and we get our claim by the above considerations.
\end{proof}

The above Proposition can be reformulated in a stronger way. More precisely, we can determine finitely many elements of $(I_{\wt Z})_{x,k}^\circ$ which already describe the ramification of $\pi$ by use of Gr\"obner bases.

\begin{thm}\label{Gro}
 Assume $p = (1:0:\cdots:0)$, and let $G$ be a Gr\"obner basis of $I_{\wt Z}$ with respect to an elimination ordering $\sigma$. Then, for all $k \in \Nat$, the leading coefficients $\LC_{x_0}(g)$ for $g \in G_{x_0,k}$ form a Gr\"obner basis of $\sK_k^p(I_{\wt Z})$ with respect to the ordering induced by $\sigma$ on $S$.
\end{thm}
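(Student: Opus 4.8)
The plan is to reduce the whole statement to a single structural feature of an elimination ordering and then run a routine divisibility argument for Gr\"obner bases. Throughout, write $S = K[x_1,\dots,x_n]$, identify $R = S[x_0]$, and let $\ol{\sigma}$ be the ordering induced by $\sigma$ on the monomials of $S$. The lemma I would establish first is that, precisely because $\sigma$ eliminates $x_0$ (so that any monomial of higher $x_0$-degree exceeds any monomial of lower $x_0$-degree), the $\sigma$-leading term reads off the top $x_0$-coefficient: for every nonzero $f \in R$ with $k = \deg_{x_0}(f)$,
\[
\LT_\sigma(f) \,=\, x_0^{\,k}\cdot\LT_{\ol{\sigma}}(\LC_{x_0}(f)).
\]
This is immediate from writing $f = x_0^k\LC_{x_0}(f) + (\text{lower } x_0\text{-degree terms})$: all monomials of the first summand dominate all monomials of the remainder, and among the monomials of $x_0$-degree $k$ the comparison is governed by $\ol{\sigma}$ on the $S$-part.

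With this in hand I would verify the two defining properties of a Gr\"obner basis of $\sK_k^p(I_{\wt Z})$ with respect to $\ol{\sigma}$: that each proposed generator lies in the ideal, and that the $\ol{\sigma}$-leading terms of the generators generate $\LT_{\ol{\sigma}}(\sK_k^p(I_{\wt Z}))$. For membership, any $g \in G_{x_0,k}$ satisfies $g \in I_{\wt Z}$ and $j \defin \deg_{x_0}(g) \le k$; multiplying by $x_0^{k-j}$ gives $x_0^{k-j}g = x_0^k\LC_{x_0}(g) + \wt{f}$ with $\wt{f} \in I_{\wt Z}$ forced and $\deg_{x_0}(\wt{f}) < k$, which is exactly the condition $\LC_{x_0}(g) \in \sK_k^p(I_{\wt Z})$. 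In particular this yields the inclusion $\langle\,\LT_{\ol{\sigma}}(\LC_{x_0}(g)) \mid g \in G_{x_0,k}\,\rangle \tm \LT_{\ol{\sigma}}(\sK_k^p(I_{\wt Z}))$.

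The heart of the argument is the reverse inclusion. I would take a nonzero $f_0 \in \sK_k^p(I_{\wt Z})$; by definition there is some $\wt{f}$ with $\deg_{x_0}(\wt{f}) < k$ and $h \defin x_0^k f_0 + \wt{f} \in I_{\wt Z}$, and since $f_0 \neq 0$ we have $\deg_{x_0}(h) = k$ and $\LC_{x_0}(h) = f_0$, so the lemma gives $\LT_\sigma(h) = x_0^k\LT_{\ol{\sigma}}(f_0)$. As $G$ is a Gr\"obner basis of $I_{\wt Z}$, some $g \in G$ has $\LT_\sigma(g) \mid \LT_\sigma(h)$; applying the lemma to $g$ writes $\LT_\sigma(g) = x_0^{\deg_{x_0}(g)}\LT_{\ol{\sigma}}(\LC_{x_0}(g))$, and comparing $x_0$-exponents forces $\deg_{x_0}(g) \le k$, i.e. $g \in G_{x_0,k}$, while the $S$-parts give $\LT_{\ol{\sigma}}(\LC_{x_0}(g)) \mid \LT_{\ol{\sigma}}(f_0)$. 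Hence $\LT_{\ol{\sigma}}(f_0)$ lies in the ideal generated by the $\LT_{\ol{\sigma}}(\LC_{x_0}(g))$ with $g \in G_{x_0,k}$, which is the inclusion that was missing.

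The only genuinely delicate point is the leading-term lemma, and its validity is exactly what the hypothesis ``elimination ordering'' provides; once the separation of $x_0$-degrees is secured, both directions collapse to the elementary fact that a monomial divisibility $x_0^a m \mid x_0^b m'$ with $m,m' \in S$ splits into $a \le b$ and $m \mid m'$. Before finishing I would also confirm that $\ol{\sigma}$ is genuinely a monomial order on $S$ (so that the conclusion is meaningful) and that $G_{x_0,k}$, being a subset of the finite set $G$, is finite, so that the output is a finite Gr\"obner basis as claimed.
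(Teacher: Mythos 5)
Your proof is correct, but note that the paper does not actually prove this proposition internally: it defers entirely to \cite[Proposition 3.4]{CS}, only observing that the definition of partial elimination ideals used there agrees with the paper's for $p = (1:0:\cdots:0)$ and $x = x_0$. So what you have written is a self-contained replacement for that citation, and it is essentially the standard argument underlying the cited result: the leading-term lemma $\LT_\sigma(f) = x_0^{\deg_{x_0}(f)}\LT_{\ol{\sigma}}(\LC_{x_0}(f))$, membership of the coefficients $\LC_{x_0}(g)$ in $\sK_k^p(I_{\wt Z})$ via multiplication by a power of $x_0$, and the splitting of monomial divisibility for the reverse inclusion. Two small points would tighten it. First, in the membership step the clause ``with $\wt{f} \in I_{\wt Z}$ forced'' is false as stated (the tail of $x_0^{k-j}g$ need not lie in $I_{\wt Z}$) and is also not what the definition of $\sK_k^p(I_{\wt Z})$ asks for: the definition requires $x_0^k\LC_{x_0}(g)+\wt{f} \in I_{\wt Z}$, which holds simply because this sum equals $x_0^{k-j}g \in I_{\wt Z}$; the argument survives verbatim with that correction. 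Second, if ``elimination ordering'' is taken in the usual weak sense (every monomial involving $x_0$ exceeds every monomial of $S$), then the property your lemma rests on --- that higher $x_0$-degree always dominates --- is not the literal definition but follows from it by multiplicativity of monomial orders: $x_0^a m > x_0^b m'$ with $a > b$ is equivalent to $x_0^{a-b}m > m'$, which the weak property grants; one sentence to this effect would make the lemma airtight. With these touch-ups your argument is a complete proof, and in the context of this paper it is arguably a gain, since it spares the reader the trip to the reference.
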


For a proof, see \cite[Proposition 3.4]{CS} and observe that their definition of partial elimination ideals coincides with ours for $p = (1:0:\cdots:0)$ and $x = x_0 \in R_1\bs\idp$.

\begin{cor}\label{GBcovering}
 Assume $p = (1:0:\cdots:0)$, and let $G$ be a finite Gr\"obner basis of $I_{\wt Z}$ with respect to an elimination ordering. The sets $D_g$ for $g \in G_{x_0,k}^\circ$ form a finite affine covering of $Z_k^\circ$. Moreover, for all $g \in G_{x_0,k}^\circ$, all $q \in D_g$, and all partitions $\lambda$ of $k$, it holds \[q \in Z_\lambda^\circ \,\Leftrightarrow\, g\res_q \in X_\lambda^\circ.\]
\end{cor}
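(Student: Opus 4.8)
The plan is to obtain the Corollary as a finite refinement of Theorem \ref{covering}, using Theorem \ref{Gro} to replace the full family $(I_{\wt Z})_{x_0,k}^\circ$ by the finite Gr\"obner-basis subfamily $G_{x_0,k}^\circ$. The equivalence $q \in Z_\lambda^\circ \Leftrightarrow g\res_q \in X_\lambda^\circ$ then costs nothing: every $g \in G_{x_0,k}^\circ$ lies in $I_{\wt Z}$ and satisfies $\deg_{x_0}(g) = k$, so $g \in (I_{\wt Z})_{x_0,k}^\circ$, and for each $q \in D_g$ the asserted equivalence is precisely the second statement of Theorem \ref{covering}. Finiteness of the covering is clear because $G$ is finite. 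Hence the only substantive point is that the sets $D_g$, $g \in G_{x_0,k}^\circ$, exhaust $Z_k^\circ$.

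To establish the covering, I would fix an arbitrary closed point $q \in Z_k^\circ$ with homogeneous ideal $\idq$ and exhibit one $g \in G_{x_0,k}^\circ$ with $\LC_{x_0}(g) \notin \idq$; since $D_g = Z_k^\circ \bs V(\LC_{x_0}(g))$, this yields $q \in D_g$. Two ingredients feed the argument. First, by Proposition \ref{PEI}, membership $q \in Z_k^\circ$ forces $\sK_k^p(I_{\wt Z}) \ntm \idq$ together with $\sK_{k-1}^p(I_{\wt Z}) \tm \idq$. Second, by Theorem \ref{Gro}, the leading coefficients $\LC_{x_0}(g)$ for $g \in G_{x_0,k}$ form a Gr\"obner basis, hence a generating set, of $\sK_k^p(I_{\wt Z})$.

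Combining these, since $\sK_k^p(I_{\wt Z}) \ntm \idq$, the generators $\LC_{x_0}(g)$, $g \in G_{x_0,k}$, cannot all lie in $\idq$, for otherwise the ideal they generate would lie in $\idq$ as well. I would pick such a $g \in G_{x_0,k}$ with $\LC_{x_0}(g) \notin \idq$. The remaining task is to verify that this $g$ has $x_0$-degree exactly $k$, that is $g \in G_{x_0,k}^\circ$ and not merely $g \in G_{x_0,k}$. For this I would invoke Theorem \ref{Gro} a second time, at the index $k-1$: every $g' \in G_{x_0,k-1}$ has $\LC_{x_0}(g') \in \sK_{k-1}^p(I_{\wt Z}) \tm \idq$. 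Thus the selected $g$ cannot belong to $G_{x_0,k-1}$, so $\deg_{x_0}(g) = k$ and $g \in G_{x_0,k}^\circ$, which finishes the covering.

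I expect the single delicate point to be this degree bookkeeping --- guaranteeing that the separating Gr\"obner-basis element has $x_0$-degree precisely $k$ rather than smaller --- and it is resolved cleanly by applying Theorem \ref{Gro} at both indices $k$ and $k-1$ and exploiting the inclusion $\sK_{k-1}^p(I_{\wt Z}) \tm \idq$. Beyond that, the proof is a direct transcription of the argument for Theorem \ref{covering}, now carried out over the finite family $G_{x_0,k}^\circ$ in place of $(I_{\wt Z})_{x_0,k}^\circ$.
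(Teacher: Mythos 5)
Your proposal is correct and takes essentially the same approach as the paper: the paper's proof simply declares the corollary ``clear by the Propositions \ref{covering} and \ref{Gro}'', and your write-up supplies exactly the details behind that assertion. In particular, your degree bookkeeping --- using Proposition \ref{Gro} at index $k-1$ together with $\sK_{k-1}^p(I_{\wt Z}) \tm \idq$ to rule out $g \in G_{x_0,k-1}$ --- is the one point the paper leaves implicit, and you resolve it correctly.
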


\begin{proof} This is clear by the Propositions \ref{covering} and \ref{Gro}.
\end{proof}

The last Corollary also applies mutatis mutandi if $p \neq (1:0:\cdots:0)$. Indeed, the number of linear factors of a binary form and their distribution do not change under a coordinate transformation (compare \cite{Chi}). Also, partial elimination ideals commute with coordinate transformations (see \cite[Lemma 2.6]{ego}). Hence, for an arbitrary projection center $p \in \PP^n_K$, we can choose a graded automorphism $\psi: R \overset\cong\to R$ with $\psi(\idp) = \langle x_1, \ldots, x_n\rangle$ and compute a Gr\"obner basis $G$ of $\psi(I_{\wt Z})$ with respect to an elimination ordering. The elements of $\psi^{-1}(G_{x_0,k}^\circ)$ then form a finite afffine covering describing the ramification of $\pi$ as in Corollary \ref{GBcovering}.

\section{Equations for $Z_\lambda$}

Fix $k \in \NN$. We denote by $K[z_0, \ldots, z_k]$ the homogeneous coordinate ring of $\PP^k_K$ and consider the natural inclusion $\iota: K[z_0, \ldots, z_k] \inkl S[z_0, \ldots, z_k]$. If $f \in R_{x,k}$, i.e., if $\deg_{x}(f) \leq k$, then we can write $f = \sum_{i=0}^kf_ix^{k-i}$ with $f_0, \ldots, f_k \in S$. Accordingly, for $F \in K[z_0, \ldots, z_k]$, we set \begin{equation}\label{F(f)}F(f) \,\defin\, \iota(F)(f_0, \ldots, f_k) \,\in\, S.\end{equation}
For a partition $\lambda$ of $k$, we can compute the ideal $I(X_\lambda)$ of the coincident root locus $X_\lambda$ in $K[z_0, \ldots, z_k]$ (compare \cite[Section 3.1]{Chi} or \cite[Remark 1.5]{ego2}).

\begin{them}\label{Satz}
 Let $\lambda$ be a partition of $k \in \NN$. Then, $Z_\lambda$ is set-theortically defined by the ideal generated by all elements of the form $F(f)$ with $F \in I(X_\lambda)$ and $f \in (I_{\wt Z})_{x,k}^\circ$ together with $\sK_{k-1}^p(I_{\wt Z})$.
\end{them}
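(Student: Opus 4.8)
The plan is to prove the set-theoretic equality $V(\idb) = Z_\lambda$, where $\idb$ denotes the ideal generated by all $F(f)$ with $F \in I(X_\lambda)$ and $f \in (I_{\wt Z})_{x,k}^\circ$ together with $\sK_{k-1}^p(I_{\wt Z})$. Since $\sK_{k-1}^p(I_{\wt Z})$ is the ideal of $Z_k$ by Proposition \ref{PEI}, the inclusion $\idb \tm \idq$ forces $q \in Z_k = Z_k^\circ \cup Z_{k+1}$, and I would treat these two strata separately, using that $Z_\lambda = (Z_\lambda\cap Z_k^\circ)\cup Z_{k+1}$ and that, for $q \in Z_k^\circ$, the condition $q \in Z_\lambda$ is equivalent to $F_q \in X_\lambda$ (by the factorisation $F_q = \prod_iL_{\wt q_i}^{\lambda_i}$ recorded before the statement of Proposition \ref{covering}).

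The computational core is a formula for the restriction $F(f)\res_q$. Fixing $y \in S_1\bs\idq$ and writing $f = \sum_{i=0}^kf_ix^{k-i}$ with $f_i \in S$ homogeneous of degree $d+i$, where $d = \deg(f)-k \geq 0$, reduction modulo $\idq$ gives $f_i\res_q = c_iy^{d+i}$ for scalars $c_i \in K$, because $S/\idq = K[y]$. Now $X_\lambda$ is invariant under the torus $\{\mathrm{diag}(s,t)\} \tm GL_2$ acting on binary forms, since the distribution of linear factors is unchanged by coordinate transformations; hence $I(X_\lambda)$ is stable under the induced action $z_j \mapsto s^{k-j}t^jz_j$ and is therefore generated by polynomials $F$ that are bihomogeneous, i.e. homogeneous both in the total degree $m = \sum_ja_j$ and in the weighted degree $w = \sum_j j\,a_j$. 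For such an $F$ a direct expansion yields
\[F(f)\res_q = y^{dm+w}\,F(c_0, \ldots, c_k),\]
so that $F(f) \in \idq$ if and only if $F(c_0, \ldots, c_k) = 0$. As $F \mapsto F(f)$ is $K$-linear, this reduces the whole question to understanding the point $(c_0:\cdots:c_k)$ and its relation to $X_\lambda$.

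Here the outer hypothesis $p \notin \wt Z$ enters decisively. On the line $\langle q,p\rangle$ the centre $p$ is the point $[1:0]$, because $y \in S_1 = \idp_1$ vanishes at $p$ while $x$ does not; since $p \notin \wt Z \om \pi^{-1}(q)$, the form $F_q$ does not vanish at $[1:0]$, that is $y \nmid F_q$. From $f \in I_{\wt Z}$ I get $f\res_q \in (I_{\wt Z}+\idq R)/\idq R \tm I(\pi^{-1}(q)) = (F_q)$, hence $F_q \mid f\res_q = y^d\,\wt F$ with $\wt F := \sum_ic_ix^{k-i}y^i$ of degree $k$; as $F_q$ is coprime to $y$, in fact $F_q \mid \wt F$. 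For $q \in Z_{k+1}$ one has $\deg(F_q)\geq k+1 > \deg(\wt F)$, forcing $\wt F = 0$, so every $F(f)\res_q = 0$ and $Z_{k+1} \tm V(\idb)$. For $q \in Z_k^\circ$ one has $\deg(F_q) = k$, so either $\wt F = 0$ or $\wt F = \kappa F_q$ with $\kappa \in K^\ast$; in the latter case $c_0 = \kappa\,\LC_x(F_q) \neq 0$, so the point $(c_0:\cdots:c_k)$, whenever it exists, is exactly $[F_q] \in \PP^k$.

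With this the two inclusions fall out. If $q \in Z_\lambda$, then either $q \in Z_{k+1}$ (already handled) or $F_q \in X_\lambda$; in the latter case, for every admissible $f$, either $\wt F = 0$ or $(c_0:\cdots:c_k) = [F_q] \in X_\lambda$, so $F(c_0,\ldots,c_k) = 0$ and $F(f) \in \idq$ for all $F$ and $f$, whence $\idb \tm \idq$. Conversely, if $\idb \tm \idq$ then $q \in Z_k$; if $q \in Z_{k+1} \tm Z_\lambda$ there is nothing to prove, and if $q \in Z_k^\circ$ I would use Proposition \ref{covering} to choose $f \in (I_{\wt Z})_{x,k}^\circ$ with $q \in D_f$, so that $c_0 \neq 0$ and $(c_0:\cdots:c_k) = [F_q]$; the vanishing $F(f) \in \idq$ for all $F \in I(X_\lambda)$ then forces $F_q \in X_\lambda$, i.e. $q \in Z_\lambda\cap Z_k^\circ$. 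The step I expect to demand the most care is precisely the passage from $F(f)\res_q$ to the scalar $F(c_0,\ldots,c_k)$: it rests on reducing to bihomogeneous $F$ via the torus symmetry of $X_\lambda$ and on the identification $\wt F = \kappa F_q$, which in turn hinges on the outer condition $y \nmid F_q$.
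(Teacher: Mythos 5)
Your proof is correct, and its skeleton matches the paper's own proof: the same stratification of $Z_k$ into $Z_k^\circ$ and $Z_{k+1}$, the same reduction to polynomials $F \in I(X_\lambda)$ that are homogeneous for both the standard grading and the weight $(0,1,\ldots,k)$, and the same key identity $F(f)\res_q = y^{dm+w}F(c_0,\ldots,c_k)$, which converts membership $F(f) \in \idq$ into vanishing of $F$ at the coefficient point of the degree-$k$ form $\wt F$ attached to $f\res_q$. You deviate in two places, both to your advantage. First, the paper obtains the weighted gradedness of $I(X_\lambda)$ by citing \cite[Remark 3.2]{ego2}, whereas you derive it from invariance of $X_\lambda$ under the torus $(x,y)\mapsto(sx,ty)$, which makes that step self-contained. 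Second, the paper treats the two strata by separate arguments: for $q \in Z_{k+1}$ a contradiction argument showing every $f \in (I_{\wt Z})_{x,k}^\circ$ lies in $\idq R$, and for $q \in Z_k^\circ$ an appeal to Proposition \ref{covering} for every $f \notin \idq R$ --- an appeal which, read literally, is licensed only when $q \in D_f$, i.e.\ when $\LC_x(f) \notin \idq$, a condition the paper does not verify. Your divisibility lemma ($F_q \mid \wt F$, because $F_q$ divides $f\res_q = y^d\wt F$ while the outer hypothesis $p \notin \wt Z$ gives $y \nmid F_q$) handles both strata uniformly: when $\deg F_q \geq k+1$ it forces $\wt F = 0$, and for $q \in Z_k^\circ$ it shows that $f \notin \idq R$ automatically implies $c_0 \neq 0$ and $(c_0:\cdots:c_k) = [F_q]$ --- exactly the fact needed to legitimize the paper's invocation of Proposition \ref{covering}. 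So your route is the same in outline but tighter in execution, and it makes explicit the role of the hypothesis $p \notin \wt Z$, which the paper uses only implicitly.
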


\begin{proof}
 Observe that $Z_k$ is defined by $\sK_{k-1}^p(I_{\wt Z})$ and that $Z_\lambda \tm Z_k$. We show that for a closed point $q \in Z_k$, the polynomials $F(f) \in S$ of our claim are contained in the ideal $\idq$ of $q$ if and only if $q \in Z_\lambda$. First, we note that according to \cite[Remark 3.2]{ego2}, the ideal $I(X_\lambda) \tm K[z_0, \ldots, z_k]$ is graded with respect to the grading induced by the weight $\omega = (0,1, \ldots, k)$, i.e., $\omega(z_j) = j$ for all $j \in \{0, \ldots, k\}$. Hence, we only need show our claim for polynomials $F \in I(X_\lambda)$ which are homogeneous with respect to the grading induced by $\omega$.\\
 Assume $q \in Z_\lambda$. If $q \in Z_{k+1}$, then $I(\pi^{-1}(q)) \tm R/\idq R$ is generated in degree $> k$. Assume that there is an element $f \in (I_{\wt Z})_{x,k}^\circ$ not contained in $\idq R$. Then $0 \neq f+\idq \in I(\pi^{-1}(q)) \tm R/\idq R$ and $\deg_x(f+\idq) = k$. But $I(\pi^{-1}(q))$ is zero in degree $k$ -- a contradiction. Hence, for all $f = \sum_{i=0}^kf_0x^{k-i} \in (I_{\wt Z})_{x,k}^\circ$ it holds $f \in \idq R$. As $x \notin \idq$, it follows $f_i \in \idq$ for all $i \in \{0, \ldots, k\}$, and thus $F(f) \in \idq$ for all $F \in K[z_0, \ldots, z_k]$.\\
 So, let $q \in Z_\lambda\cap Z_k^\circ$. For $f = \sum_{i=0}^kf_ix^{k-i} \in (I_{\wt Z})_{x,k}^\circ$, it either holds $f \in \idq R$ and hence $F(f) \in \idq$ for all $F \in K[z_0, \ldots, z_k]$ as above, or $f \notin \idq R$. In the latter case, $f\res_q \in X_\lambda$ by Proposition \ref{covering}. Let $y \in S_1\backslash\idq$, and denote by $t \defin \deg(f)$ the total degree of $f \in R$. Then, \[(\ol{f}_0, \ldots, \ol f_k) \,\defin\, \left(\frac{f_0+\idq}{y^{t-k}}, \frac{f_1+\idq}{y^{t-k+1}}, \ldots, \frac{f_k+\idq}{y^{t}}\right) \,\in\, K^{k+1}\] are coefficients of the binary form $f\res_q$. For a polynomial $F\in I(X_\lambda)$ which is homogeneous of degree $m$ with respect to the standard grading and homogeneous of degree $s$ with respect to the grading induced by $\omega$, we compute \[\begin{array}{rclcl} F(f_0+\idq, \ldots, f_k+\idq) &=& F(y^{t-k}\ol f_0, \ldots, y^t\ol f_k)\\ &=& y^{(t-k)m+s}F(\ol{f}_0, \ldots, \ol f_k) &=& 0\end{array}\] since $f\res_q \in X_\lambda$. Hence $F(f) \in \idq$. This shows the implication ``$\Rightarrow$''.\\
 Now, assume that $F(f) \in \idq$ for all $F\in I(X_\lambda)$ and all $f \in (I_{\wt Z})_{x,k}^\circ$. If $f \in \idq R$ for all $f \in (I_{\wt Z})_{x,k}^\circ$, then $\sK_k^p(I_{\wt Z}) \tm \idq$, and hence $q \in Z_{k+1}\tm Z_\lambda$. On the other hand, if there is an element $f = \sum_{i=0}^kf_ix^{k-i} \in (I_{\wt Z})_{x,k}^\circ$ with $f \notin \idq R$, then $F(f) \in \idq$ implies $F(f_0+\idq, \ldots, f_k+\idq) = 0$. If $F$ is homogeneous with respect to the standard grading as well as the grading induced by $\omega$, with the same notations and arguments as before we get $F(\ol f_0, \ldots, \ol f_k) = 0$, hence $f\res_q \in X_\lambda$. Another application of Proposition \ref{covering} finishes the proof.
\end{proof}

Again, we can use Gr\"obner bases to give a version of the above Theorem more suited to explicite computations. 

\begin{cor}\label{last}
Let $\lambda = (\lambda_1, \ldots, \lambda_e)$ be a partition of $k \in \Nat$. Assume in addition that $p = (1:0:\cdots:0)$, and let $G$ be a Gr\"obner Basis of $I_{\wt Z}$ with respect to an elimination ordering on $R$. Let $F_1, \ldots, F_s$ be generators of $I(X_\lambda) \in K[z_0, \ldots, z_k]$. Then, set-theoretically, \[I(Z_\lambda) = \left\langle\{\LC_{x_0}(g) \mid g \in G_{x,k-1}\}\cup \{F_i(g)\mid i \in \{1, \ldots, s\},g \in G_{x,k}^\circ\}\right\rangle.\]
\end{cor}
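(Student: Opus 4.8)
The plan is to derive Corollary \ref{last} from Theorem \ref{Satz} by translating its two generating sets into their Gröbner-basis counterparts under the standing hypothesis $p = (1:0:\cdots:0)$, in which case $x = x_0 \in R_1 \setminus \idp_1$ and the partial elimination ideals take the explicit form described in Theorem \ref{Gro}. Theorem \ref{Satz} asserts that $Z_\lambda$ is defined set-theoretically by the ideal generated by $\sK_{k-1}^p(I_{\wt Z})$ together with all $F(f)$ for $F \in I(X_\lambda)$ and $f \in (I_{\wt Z})_{x,k}^\circ$. So it suffices to show that, up to radical, these two infinite generating sets are captured by the finite sets appearing in the claim.

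For the first piece, I would invoke Theorem \ref{Gro}: since $p = (1:0:\cdots:0)$ and $G$ is a Gröbner basis with respect to an elimination ordering $\sigma$, the leading coefficients $\LC_{x_0}(g)$ for $g \in G_{x_0, k-1}$ form a Gröbner basis of $\sK_{k-1}^p(I_{\wt Z})$, hence generate it; this replaces the $\sK_{k-1}^p(I_{\wt Z})$ part by the finite set $\{\LC_{x_0}(g) \mid g \in G_{x,k-1}\}$. For the second piece, I would first reduce from all $F \in I(X_\lambda)$ to the chosen generators $F_1, \ldots, F_s$: since $f \mapsto F(f)$ is additive in $F$ and respects multiplication by elements of $K[z_0,\ldots,z_k]$ (via the substitution \eqref{F(f)} and the inclusion $\iota$), writing an arbitrary $F \in I(X_\lambda)$ as $F = \sum_i H_i F_i$ shows $F(f)$ lies in the ideal generated by the $F_i(f)$. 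The more delicate reduction is from all $f \in (I_{\wt Z})_{x,k}^\circ$ to the finitely many $g \in G_{x,k}^\circ$.

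The main obstacle is exactly this last reduction, and the argument is cleanest at the level of the set $Z_\lambda$ rather than trying to manipulate ideals directly. Concretely, I would argue that for any closed point $q \in Z_k^\circ$, the criterion $q \in Z_\lambda^\circ$ is governed by which single restriction $g\res_q$ one tests, and by Corollary \ref{GBcovering} the sets $D_g$ for $g \in G_{x_0,k}^\circ$ already cover $Z_k^\circ$. Thus for each $q \in Z_k^\circ$ there is some $g \in G_{x_0,k}^\circ$ with $\LC_{x_0}(g) \notin \idq$, and on $D_g$ the containment of all $F_i(g)$ in $\idq$ is equivalent to $g\res_q \in X_\lambda$, which by Proposition \ref{covering} is equivalent to $q \in Z_\lambda \cap Z_k^\circ$. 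Running the argument of Theorem \ref{Satz} verbatim but restricting attention to the covering elements $g \in G_{x,k}^\circ$ in place of the full $(I_{\wt Z})_{x,k}^\circ$, and handling the $Z_{k+1}$ part via $\sK_k^p(I_{\wt Z}) \subseteq \idq$ as before, shows that the smaller finite generating set cuts out the same set of closed points.

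Finally I would assemble the pieces: the vanishing locus of the ideal on the right-hand side of the claim consists of precisely those closed points $q$ for which both $\LC_{x_0}(g) \in \idq$ for all $g \in G_{x,k-1}$ (equivalently $q \in Z_k$) and $F_i(g) \in \idq$ for all $i$ and all $g \in G_{x,k}^\circ$ (equivalently, combined with the covering, $q \in Z_\lambda$ once $q \in Z_k$ is known). Since this is exactly the defining condition of $Z_\lambda$ supplied by Theorem \ref{Satz}, the two ideals have the same radical, which is the set-theoretic equality asserted. I expect the only genuinely subtle point to be verifying that restricting to the covering elements $g \in G_{x,k}^\circ$ loses no closed points — but this is guaranteed precisely because these $g$ cover $Z_k^\circ$ and, on each $D_g$, a single restriction $g\res_q$ detects membership in $X_\lambda$.
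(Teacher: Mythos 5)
Your proposal is correct and follows essentially the same route as the paper: Proposition \ref{PEI} and Theorem \ref{Gro} replace $\sK_{k-1}^p(I_{\wt Z})$ by the finite set of leading coefficients $\LC_{x_0}(g)$, and the covering from Corollary \ref{GBcovering} reduces the pointwise test for membership in $Z_\lambda$ to the finitely many $g \in G_{x,k}^\circ$, which is exactly the paper's argument. The additional details you spell out --- the reduction from all of $I(X_\lambda)$ to the generators $F_i$ via the substitution homomorphism, and the handling of the $Z_{k+1}$ case --- are left implicit in the paper's proof, but the underlying reasoning is the same.
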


\begin{proof} 
 By Propositions \ref{PEI} and \ref{Gro}, the leading coefficients $\LC_{x_0}(g)$ in $x_0$ of the polynomials $g \in G_{x,k-1}$ define the scheme $Z_k$ set-theoretically. According to Corollary \ref{GBcovering}, a closed point $q \in Z_k$ belongs to $Z_\lambda$ if and only if for all $g \in G_{x,k}^\circ$ with $g \notin \idq R$ it holds $g\res_q \in X_\lambda$. This is equivalent to $F_i(g) \in \idq$ for all $i \in \{1, \ldots, s\}$. 
\end{proof}

As before, we can also apply the above Corollary if $p \neq (1:0:\cdots:0)$ by use of a graded automorphism $\psi: R \stackrel{\cong}{\to} R$ with $\psi(\idq) = \langle x_1, \ldots, x_n\rangle$. For a Gr\"obner basis $G$ of $\psi(I_{\wt Z})$ with respect to an elimination ordering, the ideal of $Z_\lambda$ is set-theoretically generated by all polynomials $\psi^{-1}(\LC_{x_0}(g))$ for $g \in G_{x,k-1}$ and $\psi^{-1}(F_i(g))$ for $g \in G_{x,k}^\circ$ where $F_1, \ldots, F_s$ are generators of $I(X_\lambda)$.

\medskip

{\bf Acknowledgments:} I thank my Ph.D. advisor Markus Brodmann for careful proofreading and reminding me of my intention to keep this paper short.

\end{document}